\newtheorem{theorem}{Theorem}[section]
\newtheorem{corollary}[theorem]{Corollary}
\newtheorem{lemma}[theorem]{Lemma}
\newtheorem{proposition}[theorem]{Proposition}
\theoremstyle{definition}
\newtheorem{remark}{Remark}
\begin{document}

\title{\boldmath On the number of frequency hypercubes~F$^n(4;2,2)$%
\thanks{The work of M. J. Shi is supported by
the National Natural Science Foundation of China (12071001, 61672036),
the Excellent Youth Foundation of Natural Science Fo12071001undation of Anhui Province (1808085J20),
the Academic Fund for Outstanding Talents in Universities (gxbjZD03);
the work of D. S. Krotov is
supported within the framework of the state contract of the Sobolev Institute of Mathematics (project no. 0314-2019-0016).}
}
\author{Minjia~Shi\thanks{School of Mathematical Sciences, Anhui University, Hefei, 230601, China}, Shukai Wang, Xiaoxiao Li,
and {Denis~S.~Krotov}\thanks{Sobolev Institute of Mathematics, Novosibirsk, 630090, Russia}
}
\maketitle

\begin{abstract}
A {frequency  $n$-cube}
F$^n(4;2,2)$
is an $n$-dimensional
$4\times\ldots\times 4$ array filled by $0$s and $1$s
such that each line contains exactly two $1$s.
We classify the frequency  $4$-cubes F$^4(4;2,2)$,
find a testing set of size $25$ for F$^3(4;2,2)$,
and derive an upper bound on the number of F$^n(4;2,2)$.
Additionally, for any $n$ greater than $2$, we construct an
F$^n(4;2,2)$ that cannot be refined to a latin hypercube,
while each of its sub-F$^{n-1}(4;2,2)$ can.

{\bf Keywords:}
frequency hypercube,
frequency square,
latin hypercube,
testing set,
MDS code
\end{abstract}

\section{Introduction}

A \emph{frequency hypercube} (\emph{frequency $n$-cube})
F$^n(q;\lambda_0, \ldots , \lambda_{m-1})$
is an $n$-dimensional
$q\times \cdots \times q$ array, where $q=\lambda_0+ \cdots +\lambda_{m-1}$,
filled by numbers $0, \ldots, m-1$
with the property that each line contains exactly $\lambda_i$ cells with symbol $i$, $i=0,\ldots , m-1$
(a \emph{line} consists of $q$ cells of the array differing in one coordinate).
Frequency hypercubes generalize both frequency squares, $n=2$,
and latin hypercubes, $\lambda_0= \cdots =\lambda_{q-1}=1$
(alternatively, frequency hypercubes F$^{n+1}(q;1,q-1)$
are also equivalent to latin $n$-cubes),
see respectively
\cite{BraMul:86},
\cite{Krc:2007}
and
\cite{MK-W:small}
for the enumeration of the objects of small size.
In their turn, frequency squares and  latin hypercubes
are different generalizations of latin squares,
which have parameters F$^2(q;1, \ldots ,1)$ in our notation.

In the current work, we study frequency hypercubes of order $q=4$.
Latin hypercubes of order $4$,
corresponding to
F$^n(4;1,1,1,1)$
and F$^n(4;1,3)$
are characterized in~\cite{KroPot:4}, and their
structures are now rather well understood;
in particular,
not only the asymptotics~\cite{PotKro:asymp},
but also an effective recursive
formula
for their number is known~\cite{PotKro:numberQua.en},
see sequences~A211214 and~A211215 in OEIS~\cite{OEIS}.
Another case, F$^n(4;1,1,2)$, is solved
in~\cite{Pot:2012:partial}, where it is proved that
every frequency $n$-cube F$^n(4;1,1,2)$
can be subdivided to F$^n(4;1,1,1,1)$.
We focus on the remaining case F$^n(4;2,2)$.
It should also be mentioned that all frequency hypercubes
with $q<4$ correspond to latin hypercubes of order $q$,
which form only one isotopy class, for each $n$ and $q$.

The goals of our paper are, firstly, to enumerate all frequency hypercubes
F$^3(4;2,2)$ and F$^4(4;2,2)$, and secondly,
improve the upper bound on the number of frequency hypercubes
F$^n(4;2,2)$. The second result is based on calculations
in small dimensions,
where we study some more general class of objects than the frequency
hypercubes, called unitrades.
We use the language of coding theory as the working language in this paper and
study sets of vertices of the Hamming graph, called codes. In particular,
the frequency hypercubes F$^n(4;2,2)$
are represented by the special
codes called double-MDS-codes in the Hamming graph $H(n,4)$
in such a way that a frequency hypercube F$^n(4;2,2)$ is
the characteristic function of a double-MDS-code.

The trivial upper bound $2^{3^n}$ on the number of frequency cubes F$^n(4;2,2)$
is based on the existence of an easy-to-find testing set from $3^n$ points 
such that the frequency cube is uniquely defined by its values on those points.
Our new upper bound, of form $2^{\alpha^n}$, is obtained by finding an appropriate testing set of size $\alpha^n$, where $\alpha<3$.
Note that a lower bound of form $2^{2^{n+o(1)}}$ on the number of F$^n(4;2,2)$ is
immediate from the similar lower bound for F$^n(4;1,1,1,1)$
(see \cite{PotKro:asymp}). So, the number of objects is double-exponential.
In the framework of our study,
the following questions are actual and remain open:
What is the asymptotics of the double-logarithm of the number of  F$^n(4;2,2)$
(as well as F$^n(q;\lambda_0, \ldots , \lambda_{m-1})$ for any fixed
$q>4$, $\lambda_0$,\ldots , $\lambda_{m-1}$)?
What is the minimum size of a testing set for F$^n(4;2,2)$
(in general, for F$^n(q;\lambda_0, \ldots , \lambda_{m-1})$, $q>3$), in particular, for
F$^3(4;2,2)$?

As follows from the definition, double-MDS-codes are equivalent to
simple orthogonal arrays OA$(4^n/2,n,4,n-1)$,
see~\cite{GreCol:OA} for the definition and notation.
In~\cite{Potapov2019:medium}, based on a lower bound on the number of double-MDS-codes,
Potapov derived a lower bound on the number
of $n$-ary balanced correlation immune (resilient) Boolean functions of order $n/2$
(equivalently, simple orthogonal arrays OA$(2^{n-1},n,2,n/2)$).
Substituting the results of our computation of the number of F$^4(4;2,2)$
to the arguments in~\cite{Potapov2019:medium} straightforwardly
improves the bound~\cite{Potapov2019:medium} 
for correlation immune functions for $n=8$.

\smallskip

The material is organized as follows. Section~\ref{sec:2} sets up the basic notations and definitions.
Section~\ref{333} gives a straightforward algorithm to classify double-MDS-codes and unitrades.
The results obtained by implementation of the algorithm are shown in tables.
Section~\ref{s:split} introduces the splittability property of double-MDS-codes.
The main result of this paper is established in Section~\ref{sec:5}.
We give a testing set of size $25$ for F$^3(4;2,2)$
and derive an upper bound on the number of F$^n(4;2,2)$.


\section{Basic notations and definitions}\label{sec:2}
\subsection{Double-codes and related objects}
Let $\Sigma=\{0,1, \ldots, q-1\}$ and $\Sigma^{n}$ be the set of words of length $n$ over the alphabet~$\Sigma$.
Subsets of $\Sigma^{n}$ are called codes of \emph{length} $n$.
We set $q=4$ in this paper,
while the definitions are valid for arbitrary $q$.
The \emph{Hamming graph} $H(n,q)$ is a graph on the vertex set $\Sigma^{n}$,
where two vertices are adjacent if they differ in one coordinate.
We call a set of $q$ pairwise adjacent vertices in $H(n,q)$ a \emph{line} (essentially, a line is a maximal clique in $H(n,q)$).
A (distance-$2$) \emph{MDS code} is a subset of $\Sigma^{n}$ intersecting every line in exactly one element
(we consider only the minimum-distance-$2$ MDS codes,
while in coding theory
this concept is more general).
A set $S\subset \Sigma^{n}$ is called a \emph{double-MDS-code} if each line of $\Sigma^{n}$ contains exactly two elements from $S$.
A set $S\subset \Sigma^{n}$
is called a \emph{unitrade}
if each line of $\Sigma^{n}$
contains an even number of elements from $S$.
If each line of $\Sigma^{n}$
contains zero or two  elements
from $S$,
then $S$ is called a \emph{double-code}.
A double-code is called \emph{splittable}
if it can be represented as a union of two independent sets
(if the double-code is a double-MDS-code, then these two sets are MDS codes).
The characteristic function
$\chi_S:\Sigma^n \to \{0,1\}$ of a double-MDS-code is
a frequency $n$-cube F$^n(q;q-2,2)$.

For example, the picture below illustrates
an incomplete collection of
unitrades in $H(2,4)$;
the first four of them (and the complement of the $5$th) are double-codes,
while only the $3$rd one and the $4$th one are double-MDS-codes, both splittable.
$$
\begin{array}{|@{\ }c@{\ }|@{\ }c@{\ }|@{\ }c@{\ }|@{\ }c@{\ }|}
\hline
\phantom\bullet&\phantom\bullet&\phantom\bullet&\phantom\bullet\\
\hline
 & & & \\
\hline
 & & & \\
\hline
 & & & \\
\hline
\end{array}\quad
\begin{array}{|@{\ }c@{\ }|@{\ }c@{\ }|@{\ }c@{\ }|@{\ }c@{\ }|}
\hline
\bullet&\bullet&\phantom\bullet&\phantom\bullet\\
\hline
\bullet&\bullet&\ &\ \\
\hline
 & & & \\
\hline
 & & & \\
\hline
\end{array}\quad
\begin{array}{|@{\ }c@{\ }|@{\ }c@{\ }|@{\ }c@{\ }|@{\ }c@{\ }|}
\hline
\circ&\bullet& & \\
\hline
\bullet&\circ& & \\
\hline
 & &\bullet & \circ \\
\hline
 & &\circ&\bullet\\
\hline
\end{array}\quad
\begin{array}{|@{\ }c@{\ }|@{\ }c@{\ }|@{\ }c@{\ }|@{\ }c@{\ }|}
\hline
\bullet&\circ& & \\
\hline
\circ& &\bullet& \\
\hline
 &\bullet& &\circ  \\
\hline
 & &\circ&\bullet \\
\hline
\end{array}\quad
\begin{array}{|@{\ }c@{\ }|@{\ }c@{\ }|@{\ }c@{\ }|@{\ }c@{\ }|}
\hline
\bullet&\bullet& & \\
\hline
\bullet& &\bullet& \\
\hline
 \bullet&& &\bullet\\
\hline
 \bullet& \bullet&\bullet&\bullet\\
\hline
\end{array}
$$

\begin{remark}
 The concept ``unitrade'' was introduced in~\cite{Potapov:2013:trade} in the context
 of study of MDS codes. 
 The reader can observe a difference between our definition
 and that in~\cite{Potapov:2013:trade}.
 The reason is that the definition of a unitrade is based
 on the properties of the symmetric difference of two objects
 of the main class under the study. Our main class is double-MDS-codes,
 and the symmetric difference can have $0$, $2$, or $4$ elements 
 in a line; this property is taken as the definition of a unitrade.
 In~\cite{Potapov:2013:trade}, the main class is MDS codes, and a unitrade
 is allowed to have only $0$ or $2$ elements in a line, as the symmetric
 difference of any two MDS codes.
\end{remark}

Given a set $M \subset \Sigma^n$, a \emph{layer} of $M$ consists of
all elements of $M$ that have some fixed value $a$ in a given coordinate $i$
(referred to as the \emph{direction} of the layer):
$\{(c_0,\ldots,c_n)\in C \mid  c_i=a \}$.
We identify a layer with the corresponding subset of $\Sigma^{n-1}$
by ignoring the fixed coordinate.
Note that any layer inherits the property of the set to be
an MDS code, double-MDS-code, double-code, or unitrade.

\subsection{Isotopy and equivalence}
An \emph{isotopy} in $\Sigma^{n}$ is an $n$-tuple of permutations
$\theta_{i}:\Sigma\rightarrow\Sigma$, $i\in\{1,\ldots,n\}$.
For an isotopy $\bar\theta=(\theta_1,\ldots,\theta_n)$
and a set $S\subset\Sigma^{n}$,
we put
$$\bar{\theta}S=\{(\theta_{1}x_{1},\ldots,\theta_{n}x_{n})\mid (x_{1},\ldots,x_{n})\in S\}.$$
Two sets $S_{1}\subset\Sigma^{n}$
and $S_{2}\subset\Sigma^{n}$
are \emph{isotopic}
if there exists an isotopy
$\bar{\theta}$ such that
$\bar{\theta}S_{1}=S_{2}$.
An \emph{autotopy} of a set $S\subset\Sigma^{n}$
is an isotopy $\bar\theta$ such that
$\bar\theta S = S$.

Two sets $S_{1}\subset\Sigma^{n}$
and $S_{2}\subset\Sigma^{n}$
are \emph{equivalent} if there exists an isotopy
$\bar{\theta}$
and a permutation $\sigma$ of $n$ coordinates such that
$\sigma\bar{\theta}S_{1}=S_{2}$, where
$$
 \sigma S=\{(x_{\sigma^{-1}(1)},\ldots,x_{\sigma^{-1}(n)}) \mid
 (x_{1},\ldots,x_{n})\in S\}.
$$
If $\sigma\bar{\theta}S=S$ holds
for some set $S\subset\Sigma^{n}$, isotopy
$\bar{\theta}$ and coordinate permutation $\sigma$,
then the pair $(\sigma,\bar{\theta})$ is called
an \emph{automorphism} of $S$.
The group of all automorphisms (autotopies) of a set $S$
with the composition as the group operation
is denoted $\mathrm{Aut}(S)$ (respectively $\mathrm{Atop}(S)$).

Two frequency $n$-cubes $\chi_{S_1}$ and $\chi_{S_2}$
are \emph{isotopic} (\emph{equivalent}) if $S_1$ is isotopic (equivalent)
to $S_2$ or $\Sigma^n\backslash S_2$.
So, we see that the number of equivalence classes
of frequency $n$-cubes F$^n(4;2,2)$ is
in general smaller than that of double-MDS-codes in $H(n,4)$:
a double-MDS-code can be nonequivalent to its complement,
but the corresponding two frequency $n$-cubes are equivalent
by the definition. An \emph{autotopy} of a frequency $n$-cube $\chi_{S}$
is an isotopy that sends the corresponding double-MDS-code $S$ to itself or to its complement $\Sigma^n\backslash S$.
The \emph{automorphisms} of a frequency $n$-cube are defined similarly.
So, the set of autotopies (automorphisms) of a frequency
$n$-cube F$^n(4;2,2)$
either coincides with the set of autotopies (automorphisms) of
the corresponding double-MDS-code, or is twice larger than it.


\subsection{Testing sets}
A subset $T$ of $\Sigma^{n}$  is a
\emph{testing set} for double-MDS-codes
(or, equivalently, for any other class of subsets of $\Sigma^{n}$)
if $C_{1}\cap{T}\ne C_{2}\cap{T}$ for any two
different double-MDS-codes in $H(n,4)$.
\begin{table}[!htpb]
$$
\begin{array}{llllllllllllll}
\cline{1-4}
\multicolumn{1}{|l|}{} &
\multicolumn{1}{l|}{} &
\multicolumn{1}{l|}{}  &
\multicolumn{1}{l|}{\phantom{\bullet}} \\ \cline{1-4}
\multicolumn{1}{|l|}{\bullet} &
\multicolumn{1}{l|}{\bullet} &
\multicolumn{1}{l|}{\bullet}  &
\multicolumn{1}{l|}{} \\ \cline{1-4}
\multicolumn{1}{|l|}{\bullet} &
\multicolumn{1}{l|}{\bullet} &
\multicolumn{1}{l|}{\bullet}  &
\multicolumn{1}{l|}{} \\ \cline{1-4}
\multicolumn{1}{|l|}{\bullet} &
\multicolumn{1}{l|}{\bullet} &
\multicolumn{1}{l|}{\bullet}  &
\multicolumn{1}{l|}{} \\ \cline{1-4}
\end{array}\ \ \
\begin{array}{llllllllllllll}
\hhline{----}
\multicolumn{1}{|l|}{\cellcolor[HTML]{EEEEEE}} &
\multicolumn{1}{l|}{\cellcolor[HTML]{E5E5E5}} &
\multicolumn{1}{l|}{\cellcolor[HTML]{E5E5E5} \circ }  &
\multicolumn{1}{l|}{\cellcolor[HTML]{BBBBBB} \circ } \\ \hhline{----}
\multicolumn{1}{|l|}{\cellcolor[HTML]{FFFFFF} } &
\multicolumn{1}{l|}{\cellcolor[HTML]{FFFFFF}\bullet} &
\multicolumn{1}{l|}{\cellcolor[HTML]{FFFFFF} }  &
\multicolumn{1}{l|}{\cellcolor[HTML]{E5E5E5} \circ } \\ \hhline{----}
\multicolumn{1}{|l|}{\cellcolor[HTML]{FFFFFF}\bullet} &
\multicolumn{1}{l|}{\cellcolor[HTML]{FFFFFF} } &
\multicolumn{1}{l|}{\cellcolor[HTML]{FFFFFF}\bullet}  &
\multicolumn{1}{l|}{\cellcolor[HTML]{E5E5E5}} \\ \hhline{----}
\multicolumn{1}{|l|}{\cellcolor[HTML]{FFFFFF}\bullet} &
\multicolumn{1}{l|}{\cellcolor[HTML]{FFFFFF}\bullet} &
\multicolumn{1}{l|}{\cellcolor[HTML]{FFFFFF} }  &
\multicolumn{1}{l|}{\cellcolor[HTML]{E5E5E5}} \\ \hhline{----}
\end{array}
$$
\caption{A double-MDS-code (right table) is reconstructed by its intersection with the testing set (left table)}\label{t:test}
\end{table}
\par
For example, $\{0,1,2\}^n$ (see, e.g., Table~\ref{t:test}) is a testing set for double-MDS-codes in $H(n,4)$; its size is $3^n$, and it is not difficult to show
that it is minimum for $n=1$, $2$. However, we can find
a smaller testing set for $n=3$; the minimum size of a testing set
in the three-dimensional case is still unknown.

\section{Classification}\label{333}

Using a computational approach, we classify double-MDS-codes
in $H(3,4)$ and $H(4,4)$ and unitrades in $H(3,4)$.
The algorithm is rather straightforward. 
Below, we describe it for double-MDS-codes,
while for unitrades it is absolutely the same
(just replace ``double-MDS-code'' by ``unitrade'' everywhere).
Before we give the details of the algorithm, 
we say few words about recognizing the equivalence.
A usual way to work with the equivalence of codes is 
to represent them by graphs in such a way that
two codes are equivalent if and only if the corresponding  
graphs are isomorphic, see~\cite[\S3.3.2]{KO:alg}. 
A standard software that helps to recognize the graph isomorphism
is \texttt{nauty\&traces}~\cite{nauty2014};
it is realized as a package that can be used in \texttt{c} or \texttt{c++}
programs. With this package, for a graph one can compute its 
canonically-labelled version, such that two graphs are isomorphic
if and only if the corresponding canonically-labelled graphs are equal to each other.
The same procedure computes the automorphism group of the graph,
which can be used for the numerical validation of the results,
see Section~\ref{s:valid}.

\subsection{Algorithm}\label{333.1}

Assume that, as a result of the past classification,
we have that the number of
double-MDS-codes of length $n-1$ is $N_{n-1}$ and they are partitioned into $a$ equivalence
classes.

We consider a double-MDS-code of length $n$ as the union of
$4$ of its layers in the last direction, essentially, $4$ double-MDS-codes of length $n-1$.
As the fourth layer is uniquely reconstructed from the first three ones, we
can run through $N_{n-1}^3$ triples of double-MDS-codes of length $n-1$
and check
if they can be completed by the fourth layer
to form a double-MDS-code of length $n$.
In such a way, we construct all double-MDS-codes of length $n$.
However, simple calculations show that the number $N_{n-1}^3$, $n=4$, 
is too large to manage all these possibilities. 
A standard way to make the search faster is, at each step,
to choose only nonequivalent cases (this procedure is often called \emph{isomorph rejection}).

\label{sss}
At \textbf{step 1},
we choose the first layer from $a$
nonequivalent double-MDS-codes of length $n-1$
(representatives of the equivalence classes).

At \textbf{step 2},
for each choice of the first layer,
we choose the second layer from all $N_{n-1}$ double-MDS-codes of length $n-1$.
So, we process $a\cdot N_{n-1}$ cases at this step.
Among all possible unions of the first layer and the second layer,
called \emph{semi-codes},
we choose only nonequivalent $a'$ representatives,
where $a'$ is the number of the equivalence classes of semi-codes.

At \textbf{step 3}, for each of $a'$
representatives of semi-codes,
we choose the third layer from $N_{n-1}$
different double-MDS-codes of length $n-1$.
So, we process $a'\cdot N_{n-1}$ cases at this step.
Some of the resulting cases cannot be completed to a
double-MDS-code of length $n$
(for example, if there are already three codewords in some line of the last direction);
for the remaining cases, the completion is unique, and we use 
isomorph rejection to keep only nonequivalent representatives of the resulting double-MDS-codes of length $n$.

\subsection{Validation} \label{s:valid}
We can check the results of the classification
using a general double-counting approach, described in~\cite[\S10.2]{KO:alg}.
We know that the number of semi-codes is $N_{n-1}^2$,
and at step~2 we found representatives of their $a'$ equivalence classes.
The number of continuations to a double-MDS-code of length $n$
is the same for all semi-codes from the same, say $i$th, equivalence class.
We denote this number by $R_i$ and remark that it was found, for each $i$, at step~3.
Hence, the total number $N_n$ of double-MDS-codes of length $n$ 
can be found as
\begin{equation}\label{eq:Nsum}
N_n=\sum_{i=1}^{a'} M_i R_i ,
\end{equation}
where $M_i$ is the number of semi-codes
in the $i$th equivalence class
(it can be easily counted from the automorphism group
of the corresponding representative).
The number $N_n$ can be alternatively found as
\begin{equation}\label{eq:Nsum'}
N_n=\sum_{j}^{} T_j , \qquad T_j=\frac{4!^n\times n!}{|\mathrm{Aut}(C_j)|},
\end{equation}
where $T_j$ is the size of the $j$th equivalence class
of double-MDS-codes of length $n$ and $C_j$
is its representative (which is kept after the final isomorph rejection at step~3).
Coincidence of the values of $N_n$ obtained from \eqref{eq:Nsum} and \eqref{eq:Nsum'}
certifies that our algorithm gives correct numbers $R_i$ and, in particular, it did not miss anything.

\subsection{Results of the classification}

The results of the classification by the algorithm in Section \ref{333.1} are reflected in following theorems and
Tables~\ref{t:uni}, ~\ref{t:4}, and ~\ref{t:3}.
The database with representatives of equivalence classes can be found in~\cite{data}.

\begin{theorem}\label{th:M3}
 There are $51678$ double-MDS-codes of length $3$,
 divided into $10$ equivalence classes or $26$ isotopy classes.  There are $51678$ frequency F$^3(4;2,2)$ cubes,
 divided into $10$ equivalence classes or $26$ isotopy classes.
\end{theorem}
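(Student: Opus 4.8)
The plan is to realize Theorem~\ref{th:M3} as the outcome of the classification algorithm of Section~\ref{333.1} specialized to $n=3$, certified by the double-counting check of Section~\ref{s:valid}. The algorithm consumes the classification of double-MDS-codes of length $2$, which are exactly the $4\times 4$ zero-one arrays with every row and column sum equal to $2$; a direct count --- e.g.\ by splitting the associated $2$-regular bipartite graph either into one $8$-cycle ($72$ of these) or into two $4$-cycles ($18$ of these) --- gives $N_2=90$ of them, forming $a=2$ equivalence classes, each of which happens to be a single isotopy class. This elementary input feeds step~1.

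Then I would run the three steps verbatim. Step~1 fixes the first layer as one of the $a=2$ representatives; step~2 ranges over all $90$ second layers and keeps the $a'$ inequivalent semi-codes; step~3 ranges over all $90$ third layers, discards the triples admitting no fourth layer (those already carrying three codewords in some line of the last direction), completes the rest uniquely, and applies isomorph rejection via \texttt{nauty} to the resulting length-$3$ double-MDS-codes. Since $N_2^3=90^3$ is small, feasibility is not the issue here --- it becomes one only for $n=4$ --- so the point is correctness, not speed. The surviving representatives are the claimed $10$ equivalence classes, and splitting each under the action of the symmetric group $S_3$ on the three coordinates refines them into the $26$ isotopy classes. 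The total $N_3=51678$ is then read off from \eqref{eq:Nsum'} as $\sum_j 4!^3\cdot 3!/|\mathrm{Aut}(C_j)|$ over these representatives.

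The step I would treat as the crux is not the construction but the certification that the isomorph rejection neither missed nor duplicated a class. For this I would use the identity of Section~\ref{s:valid}: recompute $N_3$ from \eqref{eq:Nsum} as $\sum_i M_i R_i$, where $M_i$ counts the semi-codes in the $i$th equivalence class (obtained from the automorphism group of its representative) and $R_i$ is the number of completions recorded at step~3; agreement of the two values of $N_3$ is the certificate that the enumeration is complete and irredundant.

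Finally, for the frequency-cube statement I would argue from the definitions of Section~\ref{sec:2}. The map $S\mapsto\chi_S$ is a bijection between double-MDS-codes and cubes F$^3(4;2,2)$, so the count $51678$ is immediate. For the class counts, recall that $\chi_{S_1}$ and $\chi_{S_2}$ are equivalent (isotopic) precisely when $S_1$ is equivalent (isotopic) to $S_2$ or to its complement $\Sigma^3\setminus S_2$; thus the frequency-cube classes are obtained by gluing each double-MDS-code class to the class of its complement. An involution acting on the $10$ equivalence classes yields $10$ glued classes only if it fixes every class, so the numbers $10$ and $26$ remaining unchanged is exactly the assertion that every length-$3$ double-MDS-code is isotopic to its own complement. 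This self-complementarity --- at the level of both isotopy and equivalence --- is the one extra computational claim beyond the bare enumeration, and I would verify it by testing each representative against its complement with \texttt{nauty}.
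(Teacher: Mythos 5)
Your overall route is the paper's own: seed the search with the length-$2$ classification (your count $N_2=90=72+18$ via the cycle structure of the associated $2$-regular bipartite graph is correct, as is $a=2$), run the three-step layer-by-layer construction with isomorph rejection via \texttt{nauty}, certify completeness by the agreement of \eqref{eq:Nsum} and \eqref{eq:Nsum'}, and read off $N_3=51678=\sum_j 4!^3\cdot 3!/|\mathrm{Aut}(C_j)|$, which indeed checks out against the automorphism orders in Table~\ref{t:3}. Up to and including the code counts ($51678$, $10$, $26$) your proposal matches the paper's proof.

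The gap is in your final paragraph. Your claimed verification of self-complementarity at the isotopy level would \emph{fail}: according to Table~\ref{t:3}, among the $10$ equivalence classes there is exactly one (the class with $|\mathrm{Aut}(C)|=3\cdot 4$) with $N'=1$ but $N''=0$, i.e., it is equivalent to its complement but \emph{not isotopic} to it, so the totals are $N''=9$, not $10$. Your logical reduction is itself sound --- $26$ cube isotopy classes under the paper's definition does force complementation to fix all $26$ code isotopy classes, and one can show this is all-or-nothing within each equivalence class --- but that is precisely why it backfires: for the $3\cdot 4$ class, complementation swaps its two isotopy classes, merging them into a single class of frequency cubes, so your (correct) gluing argument applied to the paper's data yields $24+1=25$, not the stated $26$. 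Your equivalence-level argument is fine ($N'=1$ in every row, giving $(10+10)/2=10$), and the total $51678$ is immediate as you say. The paper instead computes the cube isotopy count via the formula $(N+N')\cdot n!/2P$ from the caption of Table~\ref{t:4}, which uses equivalence-to-complement ($N'$) where your argument calls for isotopy-to-complement ($N''$); the two agree only when $N'=N''$. So as written your proof does not deliver the theorem's $26$: you would need either to adopt and justify the paper's formula, or to flag the tension between Theorem~\ref{th:M3}, the $N''$ column of Table~\ref{t:3}, and the stated definition of isotopy for frequency cubes --- but not to assert a self-complementarity that the classification data refute.
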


\begin{theorem}\label{th:M4}
 There are $ 55720396530 $ double-MDS-codes of length $4$,
 divided into $8895$ equivalence classes or  $ 192214 $ isotopy classes.  There are $ 55720396530 $ frequency F$^4(4;2,2)$ hypercubes,
 divided into $7203$
 equivalence classes or $154078$ isotopy classes.
\end{theorem}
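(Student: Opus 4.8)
The statements of Theorems~\ref{th:M3} and~\ref{th:M4} are purely enumerative: they report the exact number of double-MDS-codes of length $3$ and $4$, together with the number of equivalence and isotopy classes, and the corresponding counts for frequency cubes. These are not statements to be proved by hand; rather, they are the output of the classification algorithm described in Section~\ref{333.1}. Accordingly, my plan is to justify the numbers by running the algorithm to completion and then certifying its correctness by the independent double-counting check of Section~\ref{s:valid}, rather than by any closed-form argument.

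The plan is as follows. First I would establish the base case: for $n=1$, a double-MDS-code in $H(1,4)$ is just a $2$-element subset of a single line, so $N_1=\binom{4}{2}=6$, all in one equivalence class. Next I would apply the layer-by-layer construction to obtain $N_2$, then feed $N_2$ and its equivalence-class data into the three-step isomorph-rejection procedure to produce the length-$3$ objects, yielding the claimed $N_3=51678$ together with its $10$ equivalence classes and $26$ isotopy classes; then I would iterate once more, using the length-$3$ data as input, to reach $N_4=55720396530$ with $8895$ equivalence classes and $192214$ isotopy classes. At each stage the equivalence (and isotopy) classification is done by the graph-isomorphism reduction to \texttt{nauty\&traces} referenced in the text, and the class sizes are recovered from the automorphism-group orders via the orbit formula $T_j=4!^{\,n}\cdot n!/|\mathrm{Aut}(C_j)|$ in~\eqref{eq:Nsum'}. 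The frequency-cube counts then follow from the remark in Section~2.2 that $\chi_S$ identifies a double-MDS-code with its complement: I would pass from the $\mathrm{Atop}/\mathrm{Aut}$ data of double-MDS-codes to that of frequency cubes by detecting, for each class, whether the code is equivalent to its own complement, which controls whether the automorphism group doubles and hence how classes merge; this is what collapses the $8895$ code classes to $7203$ cube classes (and similarly $26$ isotopy classes do \emph{not} merge at $n=3$, giving the same $10$/$26$ counts there).

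The crucial safeguard, and the step I expect to be the real obstacle to trust in the numbers, is the validation. Since the raw search space $N_3^{\,3}$ is astronomically large for $n=4$, the computation is only feasible through aggressive isomorph rejection, and any bug in that rejection (missing a class, or double-counting continuations $R_i$) would silently corrupt the total. The defense is the consistency of the two formulas~\eqref{eq:Nsum} and~\eqref{eq:Nsum'}: one reconstructs $N_n$ from the per-class continuation numbers $R_i$ weighted by the semi-code class sizes $M_i$, while the other sums the orbit sizes $T_j$ of the final codes. These two routes use essentially disjoint pieces of the computation, so their agreement is strong evidence that nothing was lost or overcounted. Thus my proof proposal is: carry out the recursion with isomorph rejection, compute both sides of $N_n=\sum_i M_i R_i$ and $N_n=\sum_j T_j$, and report the numbers only once the two agree; the published values are precisely those that survive this cross-check, with the representatives archived in~\cite{data} so the classification can be re-verified independently.
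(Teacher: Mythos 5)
Your proposal matches the paper's own approach: Theorem~\ref{th:M4} is established exactly as you describe, by the layer-by-layer classification with isomorph rejection of Section~\ref{333.1}, validated through the agreement of the two counts~\eqref{eq:Nsum} and~\eqref{eq:Nsum'}, with the frequency-cube totals $(N+N')/2$ and $(N+N')\cdot n!/2P$ derived from the per-class data on complement-equivalence recorded in Table~\ref{t:4}. Nothing in your plan deviates from or falls short of what the authors did.
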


\begin{theorem}\label{th:U3}
 There are $ 2^{27} $ unitrades in $H(3,4)$,
 divided into $ 2528 $ equivalence classes.
 Representatives of $312$ of them are equivalent to their complements.
\end{theorem}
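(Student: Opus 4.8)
The plan is to split the statement into its arithmetical part (the count $2^{27}$) and its classification part (the numbers $2528$ and $312$), treating the former by hand and the latter by the computer search of Section~\ref{333.1}.

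First I would establish that the unitrades in $H(n,4)$ form an $\mathbb{F}_2$-vector space of dimension $3^n$, so that their number is $2^{3^n}$, which is $2^{27}$ for $n=3$. Identifying a code $S\subseteq\Sigma^n$ with its characteristic vector $\chi_S$, the defining condition ``every line meets $S$ in an even number of points'' is a homogeneous linear system over $\mathbb{F}_2$, so the unitrades are exactly the kernel of the line-incidence matrix. To compute its dimension cleanly I would write $\mathbb{F}_2^{\Sigma^n}=V^{\otimes n}$ with $V=\mathbb{F}_2^{\Sigma}$, and let $L\colon V\to\mathbb{F}_2$ be the functional summing the four values of a function on $\Sigma$, with $W=\ker L$ the space of even-weight functions, $\dim W=3$. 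The unitrade condition in direction $i$ is precisely membership in $\ker(\mathrm{id}^{\otimes(i-1)}\otimes L\otimes\mathrm{id}^{\otimes(n-i)})=V^{\otimes(i-1)}\otimes W\otimes V^{\otimes(n-i)}$, and intersecting over all $i$ (by fixing a complement $U$ of $W$ in $V$ and inspecting the $2^n$ components of $V^{\otimes n}$) yields $W^{\otimes n}$, of dimension $3^n$. This simultaneously reproves that $\{0,1,2\}^n$ is a testing set for unitrades, since projection onto those coordinates is an isomorphism $W^{\otimes n}\cong(\mathbb{F}_2^3)^{\otimes n}$.

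Next, for the $2528$ equivalence classes I would run the layer-by-layer algorithm of Section~\ref{333.1} with ``double-MDS-code'' replaced by ``unitrade'' throughout. A length-$3$ unitrade is a union of four layers in the last direction, each a unitrade of length $2$, subject to the single linear constraint that the four layers sum to $0$ over $\mathbb{F}_2$ (so that the lines in the last direction have even intersection); the $512^3$ such objects again give $2^{27}$. Generating these from the $512$ length-$2$ unitrades and applying isomorph rejection under the group generated by the $(S_4)^3$ symbol isotopies and the $S_3$ coordinate permutations, via canonical labelling in nauty as described, produces the $2528$ orbit representatives.

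Finally, for the $312$ classes equivalent to their complements, I note that the complement $\Sigma^n\setminus S$ corresponds to adding the all-ones vector (itself a unitrade, since each line has the even size $4$), so complementation is a well-defined involution on unitrades that is, however, not part of the equivalence relation here. For each of the $2528$ representatives I would compute a canonical form of the representative and of its complement and compare them, counting the classes invariant under complementation. The main obstacle is entirely computational: the population of roughly $1.3\times 10^8$ unitrades makes efficient isomorph rejection and canonical labelling the bottleneck, and I would guard against errors by the double-counting validation of Section~\ref{s:valid}, checking that the orbit sizes $4!^{n}\times n!/|\mathrm{Aut}(C_j)|$ in \eqref{eq:Nsum'} sum to $2^{27}$.
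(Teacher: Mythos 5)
Your proposal is correct, and for the substance of the theorem it coincides with the paper's route: the numbers $2528$ and $312$ are purely computational results, obtained exactly as you describe by the layer-by-layer generation of Section~\ref{333.1} applied verbatim to unitrades (where, as you correctly observe, the completion step always succeeds because the fourth layer is the $\mathbb{F}_2$-sum of the first three and is automatically a unitrade), with canonical labelling in \texttt{nauty} both for isomorph rejection and for comparing each representative with its complement, and with the double-counting validation of Section~\ref{s:valid}; here the check is even sharper than in the double-MDS case, since the total $2^{27}$ is known in advance, so summing the class sizes $4!^3\cdot 3!/|\mathrm{Aut}(C_j)|$ as in \eqref{eq:Nsum'} tests the whole classification against an exact target. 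The one place where you take a genuinely different path is the count $2^{27}$ itself. The paper's Lemma~\ref{l:3n} argues combinatorially: an upper bound $2^{3^n}$ because a unitrade is reconstructed from its restriction to $\{1,2,3\}^n$ (an element with $m>0$ zeros lies on a line whose other three points have $m-1$ zeros), matched by the explicit family $D_a$, $a\in\{1,2,3\}^n$, of $3^n$ linearly independent unitrades. Your tensor-product argument --- the unitrade space is $\bigcap_i V^{\otimes(i-1)}\otimes W\otimes V^{\otimes(n-i)}=W^{\otimes n}$ with $W$ the $3$-dimensional even-weight subspace of $V=\mathbb{F}_2^4$ --- reaches the dimension $3^n$ in one structural stroke and yields the testing-set property of $\{0,1,2\}^n$ (equivalently, the rank $4^n-3^n$ of the matrix $A$ needed in Proposition~\ref{xxxx}) as an immediate corollary of the isomorphism $W\cong\mathbb{F}_2^3$ given by forgetting one coordinate; the paper's version is more elementary and produces an explicit basis $\{X_{D_a}\}$ rather than an abstract decomposition. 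Both arguments are complete, so there is no gap; the difference is one of packaging the same linear algebra.
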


As we will see below (Lemma~\ref{l:3n}), the total number 
of unitrades in $H(n,4)$ is $2^{3^n}$ for any $n$.

\begin{table}[htbp]
\centering
\begin{tabular}{|c|c|c|}
\hline
unitrade& \multicolumn{2}{|c|}{the number of equivalence classes}\\
\hhline{|~|-|-|}
size  & of unitrades & of double-codes \\
\hhline{|=|=|=|}
0 \text{ or } 64&1 & 1\\
\hline
8 \text{ or } 56&1 & 1\\
\hline
12 \text{ or } 52&1 & 1\\
\hline
14 \text{ or } 50&1 & 1\\
\hline
16 \text{ or } 48&9 & 6\\
\hline
18 \text{ or } 46&5 & 4\\
\hline
20 \text{ or } 44&22 & 11\\
\hline
22 \text{ or } 42&26 & 11\\
\hline
24 \text{ or } 40&121 & 29\\
\hline
26 \text{ or } 38&125 & 11\\
\hline
28 \text{ or } 36&329 & 15\\
\hline
30 \text{ or } 34&328 & 4\\
\hline
32               &590 & 10\\
\hhline{|=|=|=|}
total: & 2528 & 200\\
\hline
\end{tabular}
\caption[Unitrades in H(3,4)]{Unitrades in $H(3,4)$}\label{t:uni}
\end{table}

\begin{table}[htbp]
\centering
$\begin{array}{|r|r|r|r|r|}
\hline
 |\mathrm{Aut}(C)|   &N &N'&N''&N^*\\
\hhline{|=|=|=|=|=|}
 24\cdot 2048  & 1    &  1    & 1    &  1        \\ \hline
 4'\cdot 512   & 1    &  1    & 1    &  1        \\ \hline
 6\cdot 256    & 1    &  1    & 1    &  1        \\ \hline
 2'\cdot 256   & 1    &  1    & 1    &  1        \\ \hline
 24\cdot 128   & 1    &  1    & 1    &  1        \\ \hline
 8\cdot 128    & 2    &  2    & 2    &  2        \\ \hline
 6\cdot 128    & 1    &  1    & 1    &  1        \\ \hline
 4'\cdot 128   & 1    &  1    & 1    &  1        \\ \hline
 2'\cdot 128   & 2    &  2    & 2    &  2        \\ \hline
 2''\cdot 128  & 1    &  1    & 1    &  1        \\ \hline
 6\cdot 64     & 1    &  1    & 1    &  1        \\ \hline
 4'\cdot 64    & 2    &  2    & 2    &  2        \\ \hline
 2'\cdot 64    & 3    &  3    & 3    &  2        \\ \hline
 1\cdot 64     & 1    &  1    & 1    &           \\ \hline
 8\cdot 32     & 1    &  1    & 1    &  1        \\ \hline
 6\cdot 32     & 1    &  1    & 1    &  1        \\ \hline
 4'\cdot 32    & 3    &  3    & 3    &           \\ \hline
 2'\cdot 32    & 22   &  22   & 22   &  1        \\ \hline
 1\cdot 32     & 7    &  7    & 7    &           \\ \hline
 24\cdot 16    & 2    &  2    & 2    &  2        \\ \hline
 6\cdot 16     & 4    &  4    & 4    &  3        \\ \hline
 4'\cdot 16    & 8    &  8    & 8    &  7        \\ \hline
 3\cdot 16     & 1    &  1    & 0    &           \\ \hline
 2'\cdot 16    & 27   &  27   & 25   &  6        \\ \hline
 2''\cdot 16   & 4    &  4    & 2    &           \\ \hline
 1\cdot 16     & 18   &  16   & 15   &           \\ \hline
 24\cdot 8     & 2    &  2    & 2    &  2        \\ \hline
 8\cdot 8      & 2    &  2    & 2    &  2        \\ \hline
\end{array}\ \ \
\begin{array}{|r|r|r|r|r|}
\hline
 6\cdot 8      & 10   &  10   & 10   &  3        \\ \hline
 4'\cdot 8     & 10   &  10   & 10   &  2        \\ \hline
 3\cdot 8      & 1    &  1    & 1    &           \\ \hline
 2'\cdot 8     & 58   &  58   & 57   &  2        \\ \hline
 2''\cdot 8     & 7    &  7    & 7    &  1        \\ \hline
 1\cdot 8      & 71   &  65   & 63   &           \\ \hline
 12\cdot 4     & 1    &  1    & 0    &           \\ \hline
 6\cdot 4      & 3    &  3    & 3    &           \\ \hline
 4'\cdot 4     & 18   &  16   & 16   &           \\ \hline
 3\cdot 4      & 1    &  1    & 0    &           \\ \hline
 2'\cdot 4     & 173  &  169  & 169  &           \\ \hline
 2''\cdot 4     & 6    &  6    & 6    &           \\ \hline
 1\cdot 4      & 220  &  212  & 207  &           \\ \hline
 24\cdot 2     & 1    &  1    & 1    &           \\ \hline
 6\cdot 2      & 12   &  10   & 10   &           \\ \hline
 4'\cdot 2     & 38   &  30   & 30   &           \\ \hline
 3\cdot 2      & 7    &  3    & 1    &           \\ \hline
 2'\cdot 2     & 297  &  279  & 273  &           \\ \hline
 2''\cdot 2     & 46   &  40   & 36   &           \\ \hline
 1\cdot 2      & 1057 &  917  & 875  &           \\ \hline
 8\cdot 1      & 4    &  0    & 0    &           \\ \hline
 6\cdot 1      & 21   &  19   & 19   &           \\ \hline
 4'\cdot 1     & 16   &  0    & 0    &           \\ \hline
 4^\circ\cdot 1     & 12   &  0    & 0    &           \\ \hline
 3\cdot 1      & 16   &  10   & 6    &           \\ \hline
 2'\cdot 1     & 728  &  506  & 506  &           \\ \hline
 2''\cdot 1     & 78   &  0    & 0    &           \\ \hline
 1\cdot 1      & 5862 &  3018 & 2781 &           \\ \hhline{|=|=|=|=|=|}
\mbox{total}: & 8895 & 5511 & 5200 & 50   \\ \hline
\end{array}$
\caption{\footnotesize The table reflects the number of nonequivalent double-MDS-codes in $H(n,4)$, $n=4$, with the given number of automorphisms.
In the first column, the number of automorphisms of a double-MDS-code $C$ is represented in the form $P\cdot T$,
where $T=|\mathrm{Atop}(C)|$ and $P$ is the order of the group $\mathrm{Aut}(C)/\mathrm{Atop}(C)$ acting on the four coordinates. The accents reflect the type of this group:
groups of type $2'$ and $4'$ contain a transposition;
groups of type $2''$ and $4''$ consist of the identity permutation
and involutions without fixed points;
group of type $4^\circ$ corresponds to the cyclic group of order $4$;
all other groups of permutations of the four coordinated
are defined by their orders up to conjugancy.
The second column contains the number $N$
of nonequivalent double-MDS-codes with the corresponding
restrictions on the automorphism set.
The third column contains the number $N'$ of such codes
that are equivalent to their complement.
The fourth column contains the number $N''$
of such codes that are isotopic to their complement.
$N^*$ in the fifth column is the number
of splittable double-MDS-codes.
Using these data, the following numbers can be found for each row.
The number of isotopy classes of double-MDS-codes
is $N\cdot n!/P$.
The number of nonequivalent
frequency $n$-cubes F$^n(4;2,2)$ is $(N+N')/2$;
and $(N-N')/2$ of them have $P\cdot T$ automorphisms,
while the remaining $N'$ have twice more
(including the automorphisms that change
the value of the function).
The number of non-isotopic
frequency $n$-cubes F$^n(4;2,2)$ is $(N+N')\cdot n!/2P$;
and $(N+N'-2N'')\cdot n!/2P$ of them have $T$ autotopies,
while the remaining $N''\cdot n!/P$ have twice more.
}\label{t:4}
\end{table}

\begin{table}[htbp]
\centering
$\begin{array}{|r|r|r|r|r|}
\hline
 |\mathrm{Aut}(C)|   &N &N'&N''&N^*\\
\hhline{|=|=|=|=|=|}
 6 \cdot 256   & 1    &  1    & 1    &  1        \\ \hline
 2 \cdot 64    & 1    &  1    & 1    &  1        \\ \hline
 6 \cdot 32    & 1    &  1    & 1    &  1        \\ \hline
 2 \cdot 32    & 1    &  1    & 1    &  1        \\ \hline
 2 \cdot 16    & 1    &  1    & 1    &  1        \\ \hline
\end{array}\ \ \
\begin{array}{|r|r|r|r|r|}
\hline
 2\cdot 8     & 1   &  1   & 1   &  0        \\ \hline
 1\cdot 8     & 1   &  1   & 1   &  0        \\ \hline
 6\cdot 4     & 1   &  1   & 1   &  0        \\ \hline
 3\cdot 4     & 1   &  1   & 0   &  0        \\ \hline
 2\cdot 2     & 1   &  1   & 1   &  0        \\ \hhline{|=|=|=|=|=|}
\mbox{total}: & 10 & 10 & 9 & 5   \\ \hline
\end{array}$
\caption{\footnotesize
The table reflects the number of nonequivalent
double-MDS-codes in $H(n,4)$, $n=3$, with the given number
of automorphisms.
The notation and calculation of the derived values
are the same as those in Table~\ref{t:4}.
} \label{t:3}
\end{table}


\subsection[Double-MDS-codes in H(4,4)]{Splittability of double-MDS-codes in $H(4,4)$}

From the definitions, we can get the following relationship between the splittability
of a double-MDS-code and the splittability of each of its layers. If a double-MDS-code is splittable, then each layer of this code is splittable. If there exists a non-splittable layer, then the entire code is non-splittable. But if all layers are splittable, it does not necessarily mean
that the code itself is splittable.
\par
As a result of the classification, we found that all double-MDS-codes in $H(4,4)$,
except for two equivalence classes,
are splittable if their layers are splittable.
{In the diagram below, the black vertices induce a cycle of length $9$}.
$$
\def\2{\mbox{}\,\raisebox{-1mm}{$\bullet$}\,\mbox{}}
\def\1{\mbox{}\,\raisebox{-1mm}{$\circ$}\,\mbox{}}
\def\9{\mbox{}\phantom{\circ}\mbox{}}
\def\0{}
\def\HLINE{\hhline{~~||----||~||----||~||----||~||----||~||----||~||----||~||----||~||----||}}
\def\HHLINEt{\hhline{~~|t:====:t|~|t:====:t|~|t:====:t|~|t:====:t|~|t:====:t|~|t:====:t|~|t:====:t|~|t:====:t|}}
\def\HHLINEb{\hhline{~~|b:====:b|~|b:====:b|~|b:====:b|~|b:====:b|~|b:====:b|~|b:====:b|~|b:====:b|~|b:====:b|}}
\setlength{\extrarowheight}{-4mm} 
\begin{array}{@{}c@{}@{}c@{}||@{}c@{}|@{}c@{}|@{}c@{}|@{}c@{}||@{}c@{}||
                              @{}c@{}|@{}c@{}|@{}c@{}|@{}c@{}||@{}c@{}||
                              @{}c@{}|@{}c@{}|@{}c@{}|@{}c@{}||@{}c@{}||
                              @{}c@{}|@{}c@{}|@{}c@{}|@{}c@{}||@{}c@{~~~}||
                              @{}c@{}|@{}c@{}|@{}c@{}|@{}c@{}||@{}c@{}||
                              @{}c@{}|@{}c@{}|@{}c@{}|@{}c@{}||@{}c@{}||
                              @{}c@{}|@{}c@{}|@{}c@{}|@{}c@{}||@{}c@{}||
                              @{}c@{}|@{}c@{}|@{}c@{}|@{}c@{}||}
\HHLINEt
&&\0&\0&\2&\2&\9&\1&\1&\0&\0&\9&\1&\0&\1&\0&\9&\0&\1&\0&\2&\9& \0&\0&\2&\2&\9&\1&\1&\0&\0&\9&\1&\1&\0&\0&\9&\0&\0&\2&\1\\ \HLINE
&&\0&\0&\2&\1&\9&\1&\1&\0&\0&\9&\0&\0&\2&\1&\9&\1&\1&\0&\0&\9& \0&\0&\1&\1&\9&\1&\1&\0&\0&\9&\1&\0&\1&\0&\9&\0&\1&\0&\1\\ \HLINE
&&\1&\1&\0&\0&\9&\0&\0&\1&\1&\9&\1&\1&\0&\0&\9&\0&\0&\1&\1&\9& \1&\1&\0&\0&\9&\0&\0&\2&\2&\9&\0&\1&\0&\1&\9&\1&\0&\2&\0\\ \HLINE
&&\1&\1&\0&\0&\9&\0&\0&\1&\1&\9&\0&\1&\0&\1&\9&\1&\0&\1&\0&\9& \1&\1&\0&\0&\9&\0&\0&\1&\1&\9&\0&\0&\1&\1&\9&\1&\1&\0&\0\\ \HHLINEb
\\ \HHLINEt
&&\0&\0&\1&\1&\9&\1&\1&\0&\0&\9&\0&\0&\1&\1&\9&\1&\1&\0&\0&\9& \1&\1&\0&\0&\9&\0&\0&\1&\1&\9&\0&\0&\1&\1&\9&\1&\1&\0&\0\\ \HLINE
&&\0&\0&\1&\1&\9&\1&\0&\1&\0&\9&\0&\1&\0&\1&\9&\1&\1&\0&\0&\9& \1&\1&\0&\0&\9&\0&\0&\1&\1&\9&\0&\1&\0&\1&\9&\1&\0&\1&\0\\ \HLINE
&&\1&\1&\0&\0&\9&\0&\1&\0&\1&\9&\1&\0&\1&\0&\9&\0&\0&\1&\1&\9& \0&\0&\1&\1&\9&\1&\1&\0&\0&\9&\1&\0&\1&\0&\9&\0&\1&\0&\1\\ \HLINE
&&\1&\1&\0&\0&\9&\0&\0&\1&\1&\9&\1&\1&\0&\0&\9&\0&\0&\1&\1&\9& \0&\0&\1&\1&\9&\1&\1&\0&\0&\9&\1&\1&\0&\0&\9&\0&\0&\1&\1\\ \HHLINEb
\\ \HHLINEt
&&\1&\1&\0&\0&\9&\0&\0&\1&\1&\9&\0&\1&\0&\1&\9&\1&\0&\1&\0&\9& \1&\1&\0&\0&\9&\1&\0&\1&\0&\9&\0&\0&\1&\1&\9&\0&\1&\0&\1\\ \HLINE
&&\1&\1&\0&\0&\9&\0&\0&\1&\1&\9&\1&\1&\0&\0&\9&\0&\0&\1&\1&\9& \1&\0&\1&\0&\9&\0&\1&\0&\1&\9&\0&\1&\0&\1&\9&\1&\0&\1&\0\\ \HLINE
&&\0&\0&\1&\1&\9&\1&\1&\0&\0&\9&\0&\0&\1&\1&\9&\1&\1&\0&\0&\9& \0&\1&\0&\1&\9&\1&\0&\1&\0&\9&\1&\0&\1&\0&\9&\0&\1&\0&\1\\ \HLINE
&&\0&\0&\1&\1&\9&\1&\1&\0&\0&\9&\1&\0&\1&\0&\9&\0&\1&\0&\1&\9& \0&\0&\1&\1&\9&\0&\1&\0&\1&\9&\1&\1&\0&\0&\9&\1&\0&\1&\0\\ \HHLINEb
\\ \HHLINEt
&&\1&\1&\0&\0&\9&\0&\0&\1&\1&\9&\1&\1&\0&\0&\9&\0&\0&\2&\2&\9& \0&\0&\1&\2&\9&\0&\1&\0&\2&\9&\1&\1&\0&\0&\9&\1&\0&\1&\0\\ \HLINE
&&\1&\1&\0&\0&\9&\0&\1&\0&\1&\9&\1&\0&\2&\0&\9&\0&\0&\2&\1&\9& \0&\1&\0&\1&\9&\1&\0&\1&\0&\9&\1&\0&\1&\0&\9&\0&\1&\0&\1\\ \HLINE
&&\0&\0&\1&\1&\9&\1&\0&\1&\0&\9&\0&\1&\0&\1&\9&\1&\1&\0&\0&\9& \1&\0&\1&\0&\9&\0&\1&\0&\2&\9&\0&\1&\0&\1&\9&\1&\0&\1&\0\\ \HLINE
&&\0&\0&\1&\1&\9&\1&\1&\0&\0&\9&\0&\0&\1&\1&\9&\1&\1&\0&\0&\9& \1&\1&\0&\0&\9&\1&\0&\1&\0&\9&\0&\0&\1&\1&\9&\0&\1&\0&\1\\ \HHLINEb
\end{array}
$$
%
%

In the next section, we generalize found examples
by constructing, for every length $n\ge 3$, 
a non-splittable double-MDS-code
whose layers are all splittable.

\section{Non-splittable double-MDS-codes with splittable layers}\label{s:split}
The theorem proved in this section is important for us to understand
the interrelation between the classes of double-MDS-codes and splittable double-MDS-codes.
A double-MDS-code is defined by a local property; a code is a double-MDS-code
if and only if each of its layers in each direction is a double-MDS-code.
As we will see below, the similar is not true for splittable double-MDS-codes.

We first introduce some notations.
Two adjacent vertices of the Hamming graph differ in one coordinate,
whose number is called the \emph{direction} of the corresponding edge.
Edges of the same direction are called \emph{parallel}.
We color all edges of $H(n,4)$ by three colors depending on the values
of the two words in the coordinate
in which they are different.
If the values are either $0$ and $1$ or $2$ and $3$, then the color is $1$;
if the values are either $0$ and $2$ or $1$ and $3$,  the color is $2$;
for $0$, $3$ or $1$, $2$,  the color is $3$.
If a double-MDS-code is not splittable, then its induced subgraph
of the Hamming graph is not bipartite.
In this case, there is an odd-length cycle in this subgraph.
\begin{lemma}\label{l:odd}
 Any odd-length cycle in $H(n,4)$ contains
 three parallel edges of three different colors.
\end{lemma}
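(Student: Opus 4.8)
The plan is to algebraize the coloring by reading each quaternary symbol as a pair of bits, so that the three colors become the three nonzero elements of the Klein four-group $\mathbb{Z}_2^2$. Concretely, I would identify $\Sigma=\{0,1,2,3\}$ with $\mathbb{Z}_2^2$ via $0\mapsto(0,0)$, $1\mapsto(0,1)$, $2\mapsto(1,0)$, $3\mapsto(1,1)$, and extend this coordinatewise to a bijection $\phi\colon\Sigma^n\to(\mathbb{Z}_2^2)^n=\mathbb{Z}_2^{2n}$. Under this identification an edge in direction $i$ changes only the $i$th pair of bits, and a direct check against the definition of the colors shows that color $1$ flips the second bit of that pair, color $2$ flips the first bit, and color $3$ flips both. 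Thus traversing an edge of direction $i$ and color $1$, $2$, or $3$ amounts to adding, respectively, $(0,1)$, $(1,0)$, or $(1,1)$ to the $i$th block of $\phi$.

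Next I would use the fact that a cycle is a closed walk, so the $\mathbb{Z}_2^{2n}$-vectors of its edges sum to zero. Fix a direction $i$ and let $a_i$, $b_i$, $c_i$ be the numbers of edges of colors $1$, $2$, $3$ in that direction along the cycle. Projecting the closure condition onto the $i$th block gives, in $\mathbb{Z}_2$, the two equations $b_i+c_i\equiv 0$ (first bit) and $a_i+c_i\equiv 0$ (second bit). Hence $a_i\equiv b_i\equiv c_i\pmod 2$: in each direction the three color-counts share a common parity $p_i$.

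Finally I would compare parities with the cycle length. Since $a_i+b_i+c_i\equiv 3p_i\equiv p_i\pmod 2$, the length $\sum_i(a_i+b_i+c_i)$ is congruent to $\sum_i p_i$ modulo $2$. An odd-length cycle therefore forces $\sum_i p_i$ to be odd, so some direction $i_0$ has $p_{i_0}=1$; for that direction $a_{i_0}$, $b_{i_0}$, $c_{i_0}$ are all odd, hence all at least $1$. The cycle then contains at least one edge of each color in direction $i_0$ — three parallel edges of three different colors — which is exactly the claim.

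The argument is essentially bookkeeping once the encoding is set up, so the only place demanding care is the first step: verifying that the three combinatorially defined colors correspond cleanly to the three nonzero elements $(0,1)$, $(1,0)$, $(1,1)$, and in particular that the relation $(0,1)+(1,0)+(1,1)=(0,0)$ in $\mathbb{Z}_2^2$ is what ties all three counts to a common parity. I expect no real obstacle beyond getting this correspondence and the two projected parity equations exactly right.
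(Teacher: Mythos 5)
Your proof is correct and is essentially the paper's argument in algebraic dress: your two bit-projection parity equations $b_i+c_i\equiv 0$ and $a_i+c_i\equiv 0 \pmod 2$ are exactly the paper's observation that edges of any two fixed colors in a given direction each cross a fixed partition of $\Sigma$ into two pairs (e.g.\ $\{0,1\}$ versus $\{2,3\}$), so a closed walk uses an even number of them, and your final step (odd length forces a direction with an odd number of edges, hence all three colors) is identical to the paper's. As a minor bonus, your $\mathbb{Z}_2^2$ encoding gives the slightly stronger conclusion that all three color counts in that direction are odd, not merely nonzero.
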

\begin{proof}
 If a cycle in $H(n,4)$ contains edges of at most two colors of some direction,
 then the number of the edges of this direction in the cycle is even.
 For example, if a cycle only has edges of color $2$ and $3$ in direction $i$,
 then each edge of these direction changes the value of the $i$th coordinate between the sets
 $\{0,1\}$ and $\{2,3\}$. Walking along all of the edges of the cycle, we have an
 even number of such changes because finally we return to the starting vertex.

 For a cycle of odd length, there is an odd number of edges of some direction.
 Hence, the edges of these directions are colored by three different colors.
\end{proof}

\begin{theorem}\label{th:odd}
 For any $n$ larger than $2$, there is an unsplittable double-MDS-code in $H(n,4)$ whose all layers
 in all directions are splittable.
\end{theorem}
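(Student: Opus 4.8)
The plan is to recast splittability in linear-algebraic terms and then to build, for every $n\ge 3$, an explicit double-MDS-code carrying a single, carefully placed odd cycle. I would identify the alphabet $\Sigma$ with $\mathbb{F}_2^2$ so that the three edge-colors of a direction become the three nonzero vectors $v_1=(0,1)$, $v_2=(1,0)$, $v_3=(1,1)$, a color-$c$ edge in direction $i$ being the one that adds $v_c$ to the $i$th coordinate. The crucial identity is $v_1+v_2+v_3=0$. From it one gets a constructive sufficient condition for splittability: if in every direction the code uses at most two of the three colors, then $f(x)=\sum_i \mu_i(x_i)$ is a proper $2$-coloring of the induced subgraph, where $\mu_i\colon\mathbb{F}_2^2\to\mathbb{F}_2$ is the linear functional with $\mu_i(v_c)=1$ for the (at most two) colors $c$ occurring in direction $i$ — such a $\mu_i$ exists precisely because a third color is missing. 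Read layerwise, a layer is splittable as soon as each of its directions uses at most two colors (the contrapositive of Lemma~\ref{l:odd} gives the same conclusion).

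Next I would pin down where the odd cycle must live. If all $4n$ layers are to be splittable, the odd cycle witnessing global non-splittability cannot be contained in any single layer, so it must change every one of the $n$ coordinates. Since returning to the start forces the colored increments in each direction to sum to $0$ in $\mathbb{F}_2^2$, each used direction carries at least two edges, and the distinguished direction (the one with three colors, guaranteed by Lemma~\ref{l:odd}) carries at least three; thus the shortest admissible odd cycle has length about $2n+1$, matching the $9$-cycle found above for $n=4$. This pins the target: a double-MDS-code whose induced subgraph contains an explicit $(2n+1)$-cycle that uses the distinguished direction three times (once per color) and every other direction twice (with a repeated color, so the increments cancel), and in which no two consecutive edges share a direction (otherwise three codewords would lie on one line).

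Concretely, I would start from a linear double-MDS-code $\{x:\sum_i\ell_i(x_i)=0\}$ with each $\ell_i$ a nonzero functional; such a code uses a single color per direction and is splittable, and it serves as the splittable \emph{background}. I would then apply a trade supported on the prescribed $(2n+1)$-cycle, switching the two missing colors into the distinguished direction at two of its three special edges, and check that the result is still a double-MDS-code. The point to control is that the three special edges sit at positions that are pairwise distinct in every transverse coordinate, so that no hyperplane $\{x_i=a\}$ ever sees all three colors of the distinguished direction; together with the single-colored background in the remaining directions, the layerwise criterion of the first paragraph then certifies that every layer is splittable, while the odd cycle certifies that the whole code is not.

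The main obstacle is the realization step: turning the abstract colored $(2n+1)$-cycle into an honest double-MDS-code. One must choose the codeword pairs on the lines met by the cycle and extend them globally so that (i) every line in every direction still meets the code in exactly two points, (ii) the trade does not leak a third color into a transverse slice of the distinguished direction, and (iii) the cycle stays chordless with distinct vertices. I expect the cleanest route is to fix a concrete base code for $n=3$ — which exists by Table~\ref{t:3}, and whose $2$-dimensional layers are automatically splittable, since every double-MDS-code in $H(2,4)$ has a $2$-regular bipartite row–column graph and hence splits into two matchings — and then to lengthen the cycle one direction at a time, each extension reusing a single color so that its increments cancel and the layerwise two-color condition is preserved.
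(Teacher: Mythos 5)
Your reduction of the problem is sound and in fact matches the paper's: the two-colors-per-direction splittability criterion (your linear functionals $\mu_i$ are a constructive form of the contrapositive of Lemma~\ref{l:odd}), the observation that a witnessing odd cycle must use every direction, at least twice in each and three times (once per color) in one distinguished direction, and the target of a $(2n+1)$-cycle whose three distinguished-direction edges are pairwise at distance $n-1$ in the transverse coordinates --- these are exactly the properties (i) and (ii) around which the paper's proof is organized. But your proposal stops precisely where the content of the theorem begins: no double-MDS-code realizing these properties is ever produced, and you concede as much (``the main obstacle is the realization step''). An existence theorem is not proved by a list of necessary conditions together with a plan for meeting them.

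Moreover, the realization route you sketch fails as stated. If two double-MDS-codes differ, their symmetric difference is a nonempty unitrade, and a nonempty unitrade in $H(n,4)$ has size at least $2^n$ (cf.\ the sets $D_a$ of Lemma~\ref{l:3n}; an easy induction on layers shows $2^n$ is the minimum). Hence for $n\ge 3$ there is no trade ``supported on the prescribed $(2n+1)$-cycle'': any switch of colors at the special edges necessarily propagates globally, and controlling that propagation is the whole difficulty. The paper resolves it by making the modification global from the outset: it partitions $\Sigma^n$ into $2^{n-1}$ sectors $V_a$, $a\in\{0,2\}^{n-1}\times\{0\}$, takes the background parity function $x_1+\cdots+x_{n-1}+\beta(x_n)$, and alters the correction term to $\alpha$, $\beta+1$, or $\gamma$ on a chain of $n$ designated sectors, so the trade is supported on whole sectors of size $2^{n+1}$ rather than on a neighborhood of the cycle; with that explicit $f$ in hand, verifying the double-MDS property, properties (i)--(ii), and the explicit $(2n+1)$-cycle is routine. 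Your inductive fallback (growing the cycle one direction at a time from an $n=3$ seed) is likewise only a plan: it supplies no mechanism for building the four compatible layers of the extended code, so the gap remains.
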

\begin{proof}
The idea of the construction is based on Lemma~\ref{l:odd}.
We will construct an unsplittable double-MDS-code,
with an induced cycle of odd length $2n+1$,
such that the edges of the induced graph satisfy 
the following:
\begin{itemize}
 \item [(i)] the edges of any direction from $1$ to $n-1$ are colored by only two colors;
 \item [(ii)] for any two edges of direction $n$ and colors $1$ and $3$,
 the distance between them is $n-1$ (i.e., two words from different edges differ in each coordinate from $1$ to $n-1$).
\end{itemize}
Property (i) guarantees that every layer in the $n$th direction is splittable by Lemma~\ref{l:odd}.
On the other hand, Property (ii) guarantees that every layer in any other direction is splittable,
again by Lemma~\ref{l:odd}.

To construct such a code, we divide the vertex set of $H(n,4)$
into $2^{n-1}$ \emph{sectors} $V_a$, $a\in \{0,2\}^{n-1}\times\{0\}$,
where $V_a = \{ a+b \mid b \in \{0,1\}^{n-1}\times\{0,1,2,3\} \}$.
Define three $\{0,1\}$-valued functions $\alpha$, $\beta$, $\gamma$ on $\{0,1,2,3\}$:
$$
\begin{array}{r|cccc}
 x & 0 & 1 & 2 & 3 \\ \hline
 \alpha(x) & 1 & 1 & 0 & 0 \\
  \beta(x) & 1 & 0 & 1 & 0 \\
 \gamma(x) & 0 & 1 & 1 & 0 \\
\end{array}
$$
On $\{0,1,2,3\}^n$, define the following function:
$$
f(x)=
\begin{cases}
 x_1+ ... +x_{n-1} + \alpha(x_n) \bmod 2 & \mbox{ if $x\in V_{(0, ... , 0 ,0)}$,} \\
 x_1+ ... +x_{n-1} + \beta(x_n) + 1 \bmod 2 & \mbox{ if $x\in V_{(2, ... , 2, 0, ..., 0 ,0)}$ ($n-2$ sectors),} \\
 x_1+ ... +x_{n-1} + \gamma(x_n) \bmod 2 & \mbox{ if $x\in V_{(2, ... , 2 ,0)}$,} \\
 x_1+ ... +x_{n-1} + \beta(x_n)  \bmod 2 & \mbox{ otherwise}. \\
\end{cases}
$$
Denote by $M$ the set of ones of $f$.
It is straightforward to see that $f$ is an F$^n(4;2,2)$ frequency hypercube,
and so $M$ is a double-MDS-code.
Any edge of $H(n,4)$ of color $1$ and direction from $1$ to $n-1$ is in one sector,
and the values of $f$ on the two vertices of the edge are different. Hence, such an edge does not lie in $M$,
and (i) is satisfied.
Further, any edge in $M$ of color $1$ and direction  $n$ lies in the sector $V_{(0, ... , 0 ,0)}$,
while any edge in $M$ of color $3$ and direction  $n$ lies in  $V_{(2, ... , 2 ,0)}$.
Hence, (ii) is also satisfied. As was mentioned above, properties (i) and (ii) imply that any layer of $M$ is splittable.
It remains to find a cycle of odd length
\begin{itemize}
 \item
\underline{$(0, ... , 0, 0)$, $(0, ... , 0, 1)$},
 \item
$(2,0,  ...,0,1)$, 
$(2,2,0,...,0,1)$, \ldots, 
$(2,  ...,2,0,1)$, ($n-2$ vertices),
 \item
\underline{$(2, ... , 2, 1)$, $(2, ... , 2, 2)$},
 \item
 \underline{%
$(0,2,  ...,2,2)$, 
$(0,2,  ...,2,0)$},
$(0,0,2,...,2,0)$, \ldots, 
$(0,  ...,0,2,0)$,  ($n-1$ vertices)
\end{itemize}
(for convenience, we group vertices according to the four
cases in the definition of $f$).
The cycle has $3$ edges of direction $n$ (underlined)
and two edges of each other direction.
\end{proof}

\section{An upper bound on the number of double-MDS-codes}\label{sec:5}

The number of double-MDS-codes of length $n$ has a trivial upper bound $2^{3^n}$ because every such code can be reconstructed from only $3$ of $4$ layers in any direction. Thus, the size of the minimum testing set is no more than $3^n$.
For $n=3$, this value is $27$.
\par

For a subset $P$ of $\Sigma^{n}$, we denote by $X_P$
the column-vector 
$(x_{{0}},\ldots,x_{{4^n-1}})^\mathrm{T} = 
(x_{(0,...,0,0)},x_{(0,...,0,1)},...,x_{(3,...,3,3)})^T$,
over the Galois field GF$(2)$ of order $2$,
where $x_{(t_1,...,t_n)}=1$ if $(t_1,...,t_n)\in P$ and 
$x_{(t_1,...,t_n)}=0$ otherwise 
(the elements $(t_1,...,t_n)$ of $\Sigma^n$ in the indices are listed 
in the lexicographic order). 

We recursively define the matrix $A$ of size $n4^{n-1}\times 4^n$ over GF$(2)$ as follows:
$$
A=\left( \begin{array}{c} A_{1}\\  \vdots\\ A_{n} \end{array} \right) ,
\qquad
A_{i}=
I_{4^{i-1}}\otimes (1,1,1,1) \otimes I_{4^{n-i}}, \quad 1\leq i\leq n.
$$
Here and below, the symbol $\otimes$ denotes the Kronecker product of matrices, and $I_m$ is the $m \times m$ identity matrix.
The submatrices $A_{i}$ of $A$ have sizes $4^{n-1}\times 4^n$.

{\lemma\label{xxx} A subset
$D$ of $\Sigma^n$ is a unitrade if and only if 
$ A X_D = 0$, where $A$ is the matrix defined above.
}
\begin{proof}
 The claim is straightforward from the definition of unitrade.
 Indeed, each equation from the system $ A X_D = 0$ sums 
 the values of $X_D$ over the four elements of one line;
 this sum equals $0$ if and only if $X$ has an even number of elements
 in that line (the rows of the submatrix $A_i$ correspond to the lines of direction $i$).
For example, for a unitrade of length $2$,
the definition of a unitrade says that
$$\sum\limits_{i=0}^{3} x_{(i,j)}=0, \  j=0,1,2,3,  \quad \text{and} \quad
\sum\limits_{j=0}^{3} x_{(i,j)}=0,\ i=0,1,2,3,$$
which is the same as $ A X_D = 0$, where
$A
= \left(\begin{array}{c}
I_4\otimes(1,1,1,1)\\
(1,1,1,1)\otimes I_4
\end{array}
\right).$
\end{proof}

{\lemma\label{l:3n} The number of unitrades in $\Sigma^n$ is $2^{3^n}$.
In particular, the rank of the matrix $A$ is $4^n-3^n$.
}
\begin{proof}
We firstly note that each element of $\Sigma^n$ with at $m$, $m>0$, zeros
belongs to a line where each of the other three elements 
has $m-1$ zeros. It follows that the elements of a unitrade with a zero component are uniquely reconstructed from its elements without zero components.
That is, a unitrade is uniqiely defined by its intersection with $\{1,2,3\}^n$.
So, the number of unitrades is not larger than the number $2^{3^n}$ of subsets
of $\{1,2,3\}^n$.

It remains to find ${3^n}$ linearly independent solutions of $ A X_D = 0$.
For every $a$ from $\{1,2,3\}^n$, let the set $D_a$ consist of all
$2^n$ elements of $\Sigma^n$ obtained from $a$ by replacing some 
(maybe all, maybe none) of components by zeros. It is easy to see
that $D_a$ is a unitrade. Moreover, all $X_{D_a}$, $a\in\{1,2,3\}^n$,
are linearly independent because each element of $\{1,2,3\}^n$
belongs to a unique ${D_a}$.
\end{proof}

\par

The following proposition allows to construct testing sets for double-MDS-codes.

\begin{proposition}\label{xxxx}
Suppose that there exists unitrade $D$ in $H(n,4)$ 
such that no two double-MDS-codes $C_1$, $C_2$
satisfy $\emptyset\ne C_1 \triangle C_2 \subset D$.
Then for double-MDS-codes of length $n$,
there is a testing set $T\subset \Sigma^n \setminus D$
of size $3^n-k_D$,
where $2^{k_D}$ is the number of unitrade subsets of $D$, ${k_D}\ge 1$.
\end{proposition}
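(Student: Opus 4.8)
The plan is to recast the problem in the linear algebra of unitrades and reduce the construction to a single dimension count. I identify each subset of $\Sigma^n$ with its characteristic vector over $\mathrm{GF}(2)$, so that the unitrades form the linear code $\mathcal{U}=\ker A$ of dimension $3^n$ (Lemma~\ref{l:3n}, via the characterization in Lemma~\ref{xxx}). The first thing I would record is a reformulation of the testing-set property: a set $T$ is a testing set for double-MDS-codes if and only if every nonempty symmetric difference $C_1\triangle C_2$ of two double-MDS-codes meets $T$, equivalently no nonempty such difference is contained in the complement $R:=\Sigma^n\setminus T$. This is immediate since $C_1\cap T\ne C_2\cap T$ is the same as $(C_1\triangle C_2)\cap T\ne\emptyset$. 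Because $C_1$ and $C_2$ each meet every line in two points, $C_1\triangle C_2$ meets every line evenly and is therefore a unitrade; so the hypothesis says exactly that the subspace $\mathcal{U}_D:=\{U\in\mathcal{U}\mid U\subseteq D\}$ contains no nonempty difference unitrade. Here $\mathcal{U}_D$ is genuinely a subspace (it is $\mathcal{U}$ intersected with the coordinate subspace of vectors supported on $D$), of dimension $k_D$ by the definition of $k_D$, and $k_D\ge 1$ since the (nonempty) unitrade $D$ itself lies in $\mathcal{U}_D$.

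The construction then reduces to the following target: find $T\subseteq\Sigma^n\setminus D$ with $|T|=3^n-k_D$ such that the only unitrades supported on $R=\Sigma^n\setminus T$ are those already supported on $D$, that is $\mathcal{U}_R=\mathcal{U}_D$ where $\mathcal{U}_R:=\{U\in\mathcal{U}\mid U\subseteq R\}$. Once $\mathcal{U}_R=\mathcal{U}_D$, the hypothesis forces $\mathcal{U}_R$ to contain no nonempty difference unitrade, hence no nonempty difference is contained in $R$, and $T$ is a testing set by the reformulation above. To produce such a $T$, I would take it to be a maximal subset of the coordinate set $\Sigma^n\setminus D$ for which the restrictions of unitrades to those coordinates are linearly independent (a basis of $\Sigma^n\setminus D$ in the matroid of the code $\mathcal{U}$).

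The crux is to pin down the size of this maximal independent set and to check that it collapses $\mathcal{U}_R$ onto $\mathcal{U}_D$. For the size, consider the projection of $\mathcal{U}$ onto the coordinates $\Sigma^n\setminus D$: its kernel is precisely the unitrades vanishing off $D$, namely $\mathcal{U}_D$, so its rank is $3^n-k_D$, and a maximal independent $T\subseteq\Sigma^n\setminus D$ has size exactly $3^n-k_D$. For the collapse, independence of $T$ means the restriction map $\mathcal{U}\to\mathrm{GF}(2)^T$ is surjective, so by rank--nullity its kernel $\mathcal{U}_R$ has dimension $3^n-|T|=k_D$; combined with $D\subseteq R$, which gives $\mathcal{U}_D\subseteq\mathcal{U}_R$, and the equality of dimensions, we get $\mathcal{U}_R=\mathcal{U}_D$. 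This completes the argument and yields $T\subseteq\Sigma^n\setminus D$ of size $3^n-k_D$.

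I expect the main obstacle to be precisely this bookkeeping in the last step: one must see the two dimension facts---that the projection off $D$ has rank $3^n-k_D$, and that an independent set of that size forces $\mathcal{U}_R$ to shrink exactly to $\mathcal{U}_D$---as the same statement read from the two ends of the code, so that enlarging $D$ to $R$ creates no new unitrade. Everything else is routine: the reformulation of ``testing set'' as a hitting condition and the fact that symmetric differences of double-MDS-codes are unitrades follow directly from the definitions, while the existence of $T$ is just the greedy selection of a basis of $\Sigma^n\setminus D$ inside $\mathcal{U}$.
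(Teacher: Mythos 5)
Your proof is correct and is essentially the paper's own argument, just transposed into primal (code/matroid) language: the paper stacks the singleton rows $B$ for the coordinates of $\Sigma^n\setminus D$ onto the parity-check matrix $A$, observes that $\ker\bigl(\begin{smallmatrix}A\\ B\end{smallmatrix}\bigr)=\{X_P \mid P\subseteq D,\ P \text{ a unitrade}\}$ has dimension $k_D$ (so the stacked rank is $4^n-k_D$), and extends a row basis of $A$ by $3^n-k_D$ rows of $B$ --- which is exactly your selection of a maximal independent coordinate set of $\Sigma^n\setminus D$ in the matroid of $\mathcal{U}=\ker A$. The verifications also coincide: your rank--nullity count $\dim\mathcal{U}_R=3^n-|T|=k_D=\dim\mathcal{U}_D$ is the same dimension argument the paper phrases as the coincidence of the row spaces of $\binom{A}{B'}$ and $\binom{A}{B}$.
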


\begin{proof}
Denote   $\overline{D}=\Sigma^n\backslash {D}$
and
$N=|\overline{D}|$. Let
$\overline{D}=\{{u}_0,{u}_1,...,{u}_{N-1}\}$. Note that
$\overline{D}$ is also a unitrade.
By $B$, we denote the $N\times 4^n$ matrix
with the rows
$X_{\{{u}_0\}}$, \ldots, $X_{\{{u}_{N-1}\}}$.
%
The solution space of
$\Big(\!\!\begin{array}{c}A\\B\end{array}\!\!\Big)X=0$ 
over  GF$(2)$ is
$\{X_P \mid P\subset D,\text{ $P$ is a unitrde}\}$.
 The dimension of this space is $k_D$, because
$ 2^{k_D}= | \{P \mid P\subset D,\text{ $P$ is a unitrde}\} |$
by the definition of $k_D$.
Hence, the rank of the matrix 
$\Big(\!\!\begin{array}{c}A\\B\end{array}\!\!\Big)$
is $4^n - k_D$.
The rank of $A$ is $4^n-3^n$ by Lemma~\ref{l:3n}.
So, we can find  $3^n-k_D$ rows of $B$, with numbers $i_1$, \ldots, $i_{3^n-k_D}$, forming a
$(3^n-k_D) \times 4^n$
submatrix $B'$ such that the rank
of $\Big(\!\!\begin{array}{c}A\\B'\end{array}\!\!\Big)$ is also $4^n - k_D$.
Each row of $B'$ has $1$ in exactly one position, corresponding to some $ u_i$.
We state that the 
subset $T=\{  u_i \mid i= i_1,  \ldots, i_{3^n-k_D} \}$
of $\overline D$
is a testing set for double-MDS-codes.
Indeed, if two double-MDS-codes
$C_1$ and $C_2$ meet
$C_1\cap T=C_2\cap T$,
then $C=C_1\triangle C_2$
does not intersect with $T$.
Hence, the vector $X_C$
is in the kernel of $B'$, i.e., $B'X_C = 0$.
Moreover, 
$C$ is a unitrade because it is 
the symmetric difference of unitrades.
Therefore, $X_C$ is also in the kernel of $A$, 
i.e., $AX_C = 0$.
We conclude that
$\Big(\!\!\begin{array}{c}A \\ B'\end{array}\!\!\Big) X_C = 0$, 
and hence
$\Big(\!\!\begin{array}{c}A\\B\end{array}\!\!\Big) X_C = 0$ 
(indeed, the row spaces of the matrices 
$\Big(\!\!\begin{array}{c}A\\ B'\end{array}\!\!\Big) $ 
and 
$\Big(\!\!\begin{array}{c}A\\B\end{array}\!\!\Big) $ 
coincide).
The last equality means that
$C$ is in $ \{P \mid P\subset D,\text{ $P$ is a unitrde}\} $.
By the hypothesis
of the proposition, $C$ is an empty set. It follows that $C_1=C_2$ and
$T$ is a testing set.
\end{proof}
\par
The straightforward check,
utilizing the classification in Theorems~\ref{th:M3}
and~\ref{th:U3}, shows that there are two nonequivalent unitrades in $H(n,4)$, of cardinalities $32$ and $38$,
that satisfy
the hypothesis of the proposition with $k_D=2$,
and there are no
unitrades with $k_D=3$.
The two examples are shown at the following 
diagram where white and black bullets indicate
 unitrade subsets $D'$ and $D''$, so $2^{k_D}=|\{\emptyset,D',D'',D\}|$.
$$
\def\1{\bullet}
\def\2{\color{black}\circ}
\def\0{\phantom{\circ}}
\def\HLINE{\cline{1-4}\cline{6-9}\cline{11-14}\cline{16-19}}
\begin{array}{|c|c|c|c|c|c|c|c|c|c|c|c|c|c|c|c|c|c|c|}
\HLINE
  \0&\2&\2&\0& & \2&\0&\2&\0& & \2&\2&\0&\0  & &   \0&\0&\0&\0   \\\HLINE
  \2&\0&\2&\0& & \0&\2&\2&\0& & \2&\2&\1&\1  & &   \0&\0&\1&\1   \\\HLINE
  \2&\2&\0&\0& & \2&\2&\1&\1& & \0&\1&\0&\1  & &   \0&\1&\1&\0   \\\HLINE
  \0&\0&\0&\0& & \0&\0&\1&\1& & \0&\1&\1&\0  & &   \0&\1&\0&\1   \\\HLINE
\end{array}
$$
$$
\def\1{\bullet}
\def\2{\color{black}\circ}
\def\0{\phantom{\circ}}
\def\HLINE{\cline{1-4}\cline{6-9}\cline{11-14}\cline{16-19}}
\begin{array}{|c|c|c|c|c|c|c|c|c|c|c|c|c|c|c|c|c|c|c|}
\HLINE
\0&\0&\1&\1& &\2&\1&\2&\1& &\0&\1&\1&\0& &\2&\0&\2&\0\\\HLINE
\0&\1&\0&\1& &\0&\0&\2&\2& &\2&\1&\2&\1& &\2&\0&\0&\2\\\HLINE
\1&\0&\1&\0& &\0&\1&\0&\1& &\0&\0&\1&\1& &\1&\1&\0&\0\\\HLINE
\1&\1&\0&\0& &\2&\0&\0&\2& &\2&\0&\2&\0& &\1&\1&\2&\2\\\HLINE
\end{array}
$$
The second unitrade has the following complement,
$$
\def\1{\bullet}
\def\2{\color{black}\circ}
\def\0{\phantom{\circ}}
\def\HLINE{\cline{1-4}\cline{6-9}\cline{11-14}\cline{16-19}}
\begin{array}{|c|c|c|c|c|c|c|c|c|c|c|c|c|c|c|c|c|c|c|}
                                                       \HLINE
\2&\1&\0&\0& &\0&\0&\0&\0& &\1&\0&\0&\1& &\0&\2&\0&\1\\\HLINE
\2&\0&\1&\0& &\1&\1&\0&\0& &\0&\0&\0&\0& &\0&\2&\1&\0\\\HLINE
\0&\1&\0&\1& &\2&\0&\1&\0& &\2&\1&\0&\0& &\0&\0&\2&\2\\\HLINE
\0&\0&\2&\2& &\0&\2&\1&\0& &\0&\2&\0&\1& &\0&\0&\0&\0\\\HLINE
\end{array}
$$
where removing any white vertex  results in a testing set of size $25$.

This result is generalized by the following lemma, which is a special case of~{\cite[Proposition~26]{KroPot:3}}
(for completeness, we give a proof).
\begin{lemma}\label{l:3l}
Let $T\subset\Sigma^{3}$ be a testing set for double-MDS-codes of length $3$.
Then the Cartesian product $T^{l}\subset\Sigma^{3l}$ of the testing set $T$ is a testing set for double-MDS-codes of length $3l$.
\end{lemma}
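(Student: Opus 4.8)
The plan is to argue by induction on $l$, exploiting the fact (recorded right after the definition of layers) that fixing any block of coordinates of a double-MDS-code yields a double-MDS-code on the remaining coordinates. The base case $l=1$ is exactly the hypothesis that $T$ is a testing set for length $3$. For the inductive step I assume $T^{l-1}$ is a testing set for double-MDS-codes of length $3(l-1)$ and prove the same for $T^l$ in length $3l$.

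Concretely, I regard a word of $\Sigma^{3l}$ as a pair $(a,x)$ with $a\in\Sigma^{3}$ (the first block of three coordinates) and $x\in\Sigma^{3(l-1)}$ (the remaining blocks), so that $T^{l}=T\times T^{l-1}$. Let $C_1,C_2$ be double-MDS-codes of length $3l$ with $C_1\cap T^{l}=C_2\cap T^{l}$; I must show $C_1=C_2$. For each $a\in\Sigma^{3}$ the layer $C_i^{(a)}=\{x : (a,x)\in C_i\}$ is a double-MDS-code of length $3(l-1)$. The equality $C_1\cap T^{l}=C_2\cap T^{l}$ gives, for every $a\in T$, that $C_1^{(a)}\cap T^{l-1}=C_2^{(a)}\cap T^{l-1}$; the induction hypothesis then forces $C_1^{(a)}=C_2^{(a)}$ for all $a\in T$. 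In other words, $C_1$ and $C_2$ coincide on the slab $T\times\Sigma^{3(l-1)}$.

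The second half of the step transfers this agreement to the whole space by applying the testing-set property of $T$ in the complementary block of coordinates. Fixing any $b\in\Sigma^{3(l-1)}$, the layer $D_i^{(b)}=\{y\in\Sigma^{3} : (y,b)\in C_i\}$ is a double-MDS-code of length $3$. The agreement on the slab says precisely that $D_1^{(b)}\cap T=D_2^{(b)}\cap T$, whence $D_1^{(b)}=D_2^{(b)}$ because $T$ is a testing set for length $3$. As $b$ was arbitrary, $C_1=C_2$, which completes the induction.

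The argument has no serious obstacle; the one point to handle with care is the double use of the testing-set property in two orthogonal groups of coordinates — first the inductive set $T^{l-1}$ on the last $3(l-1)$ coordinates to identify the layers over $a\in T$, and then $T$ itself on the first three coordinates to propagate the identification to the layers over every $b\in\Sigma^{3(l-1)}$. The only external fact needed (and already granted in the excerpt) is that every layer of a double-MDS-code, obtained by fixing one or several coordinates, is again a double-MDS-code of the appropriate length; applying it to single coordinates in turn justifies both the $3(l-1)$-dimensional and the $3$-dimensional layers used above.
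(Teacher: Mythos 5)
Your proof is correct and is essentially the paper's own argument: induction on $l$, first applying the inductive hypothesis to the layers indexed by elements of $T$ in one block of three coordinates, then using the length-$3$ testing property of $T$ on each fiber $\{\omega\}\times\Sigma^{3}$ to propagate the agreement everywhere. The only difference is the cosmetic choice of which end of the word carries the $\Sigma^{3}$ block, and you make explicit (the inheritance of the double-MDS property by layers) what the paper leaves implicit.
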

\begin{proof}
We prove the claim by induction. Suppose that for two double-MDS-codes $C_1$ and $C_2$ we have $C_{1}\mid_{T^{l}}=C_{2}\mid_{T^{l}}$. Then, by the induction hypothesis, for any $c\in T$, $C_{1}\mid_{T^{l-1}\times\{c\}}=C_{2}\mid_{T^{l-1}\times\{c\}}$ implies $C_{1}\mid_{\Sigma^{3(l-1)}\times\{c\}}=C_{2}\mid_{\Sigma^{3(l-1)}\times\{c\}}$. Hence, for any $\omega\in \Sigma^{3(l-1)}$, we have $C_{1}\mid_{\{\omega\}\times T}=C_{2}\mid_{\{\omega\}\times T}$. The set $\{\omega\}\times T$ is testing on $\{\omega\}\times\Sigma^{3}$. Then $C_{1}\mid_{\{\omega\}\times\Sigma^{3}}=C_{2}\mid_{\{\omega\}\times\Sigma^{3}}$ for any $\omega\in \Sigma^{3(l-1)}$.
\end{proof}
Substituting the testing set of size $25$ found above
to Lemma~\ref{l:3l}, we get a testing set of size
$25^l$ for double-MDS-codes in $H(3l,4)$.
\begin{corollary}\label{c:1}
The number of double-MDS-codes in $\Sigma^{n}$,
where $n$ is divisible by $3$, is at most $2^{\beta^{n}}$,
where $\beta =25^{\frac{1}{3}}<3$.
\end{corollary}
With a 	slightly worth bound,
the result is generalized to an arbitrary $n\ge 3$.
\begin{theorem}\textup{(upper bound)}
The number of $N_n$ double-MDS-codes in $\Sigma^{n}$, $n\ge 3$, 
is at most $2^{\alpha^{n}}$, where $\alpha<2.955$. 
Moreover, for $n$ sufficiently large, we have $N_n<2^{2.925^{n}}$.
\end{theorem}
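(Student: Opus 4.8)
The plan is to bootstrap from Corollary~\ref{c:1}, which already settles the case $3\mid n$, by padding with short testing sets in the residual coordinates. The one tool I need beyond what is in hand is the \emph{product} form of the testing-set construction: if $T_a\subset\Sigma^{a}$ and $T_b\subset\Sigma^{b}$ are testing sets for double-MDS-codes of lengths $a$ and $b$, then $T_a\times T_b\subset\Sigma^{a+b}$ is a testing set of length $a+b$. This is \cite[Proposition~26]{KroPot:3}, of which Lemma~\ref{l:3l} is a special case; the same two-step induction works verbatim, fixing the last $b$ coordinates to values of $T_b$ and using that $T_a$ is testing on each length-$a$ sublayer, then fixing the first $a$ coordinates and using that $T_b$ is testing.

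First I would collect the pieces. The sets $\{0,1,2\}\subset\Sigma^{1}$ and $\{0,1,2\}^{2}\subset\Sigma^{2}$, of sizes $3$ and $9$, are testing sets for lengths $1$ and $2$ (as noted after Table~\ref{t:test}), while the length-$3$ construction above gives a testing set of size $25$. Writing $n=3l+r$ with $l=\lfloor n/3\rfloor\ge 1$ and $r\in\{0,1,2\}$, the product of $l$ copies of the size-$25$ set with one testing set of length $r$ is a testing set of size $25^{l}\,3^{r}$. Since a double-MDS-code is determined by its restriction to a testing set, this yields $N_n\le 2^{\,25^{l}3^{r}}$.

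Then I would verify $25^{l}3^{r}\le\alpha^{n}$ for a suitable $\alpha<2.955$. Passing to logarithms, the requirement is $\ln\alpha\ge(l\ln 25+r\ln 3)/n$, and a finite check over the three residues shows the right-hand side is maximized at $n=5$ (that is, $l=1$, $r=2$), where $25\cdot 9=225$ and $225^{1/5}\approx 2.954$. Taking any $\alpha$ with $225^{1/5}\le\alpha<2.955$ then gives the bound for all $n\ge 3$; in particular $n=4$ asks only for $75\le\alpha^{4}$, and since $2.955^{4}\approx 76.2$ the exact count of Theorem~\ref{th:M4} is not even needed. I expect the tightness at $n=5$ to be the only delicate point: the inequality $225<2.955^{5}\approx 225.3$ has essentially no slack, so the numerics must be carried out carefully.

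For the sharper asymptotic statement I would exploit that the per-coordinate rate of the length-$3$ block is $25^{1/3}\approx 2.924<2.925$, leaving a fixed multiplicative gap. Rewriting $25^{l}3^{r}=3^{r}25^{-r/3}\,(25^{1/3})^{n}$, the constant $3^{r}25^{-r/3}$ (at most $9\cdot 25^{-2/3}\approx 1.05$) is eventually dominated by $(2.925/25^{1/3})^{n}$; solving $3^{r}25^{-r/3}<(2.925/25^{1/3})^{n}$ for the worst residue $r=2$ yields an explicit threshold $n_0$ (near $n\approx 150$), beyond which $25^{l}3^{r}<2.925^{n}$ and hence $N_n<2^{2.925^{n}}$.
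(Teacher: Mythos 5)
Your proposal is correct and matches the paper's proof in all essentials: both bound $N_n$ via a testing set of size $25^l\,3^r$ for $n=3l+r$, identify the worst case at $n=5$ with $225^{1/5}<2.955$, and obtain the asymptotic bound from $25^{1/3}<2.925$. The only (cosmetic) difference is that you justify the residual factor $3^r$ by the general product lemma of \cite[Proposition~26]{KroPot:3} applied to the testing sets $\{0,1,2\}^r$, whereas the paper argues directly that each of the four layers of a code of length $3l+r$ is determined by the other three — the resulting testing sets and numerics are identical.
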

\begin{proof}
By Corollary~\ref{c:1},
for $n=3l$, $l=1,2,\ldots$,
the size of a testing set is no more than $25^l=\alpha_n^n$,
where 
$$\alpha_n = \alpha_{3l}  \equiv \beta, 
\qquad \beta=\sqrt[3]{25} < 2.925.$$

For $n=3l+1$, a double-MDS-code can be considered as the union of $4$ layers,
$4$ double-MDS-codes of length $3l$.
Moreover, each of them is uniquely determined by the other three layers.
Therefore, the minimum size of a testing set
is no more than $25^l\cdot 3=\alpha_n^n$ ,
where 
$$\alpha_n 
=\alpha_{3l+1} 
=(25^l\cdot 3)^{\frac{1}{n}}
= \textstyle\beta\cdot \exp \frac{\ln3-\ln\beta}n.
$$

Similarly, for $n=3l+2$ the size of a testing set is bounded by
$25^l\cdot 3\cdot 3=\alpha_n^n$, where
$$\alpha_n 
=\alpha_{3l+2} 
=(25^l\cdot 3^2)^{\frac{1}{n}}
=\textstyle\beta \cdot \exp \frac{2(\ln3-\ln\beta)}n.
$$

Since the sequences $\alpha_{3l}$, $\alpha_{3l+1}$, $\alpha_{3l+2}$
are non-increasing, the first claim
of the theorem holds with
$$ \alpha 
= \max_{n\ge 3} \alpha_n
= \max \{\alpha_3,\alpha_4,\alpha_5\}
= \max\{25^{\frac{1}{3}}, 75^{\frac{1}{4}}, 225^{\frac{1}{5}}\}  = 225^{\frac{1}{5}} <2.955.$$

Finally, we see that $\alpha_n \to \beta$, 
which proves the second claim.
\end{proof}

\begin{remark}
It is worth to check if the same approach works for more general
class of objects, e.g. for double-codes. Direct computations show
that the set of symmetric differences of double-codes in $H(3,4)$
is the the set of all unitrades, and a testing set of size less than $27$ cannot
be found in the way described. So, the best upper bound on the number of double-codes remains trivial, i.e., $2^{3^n}$.
\end{remark}

\section{Acknowledgement}
The authors are grateful to Vladimir Potapov for useful discussions.


\providecommand\href[2]{#2} \providecommand\url[1]{\href{#1}{#1}}
  \def\DOI#1{{\small {DOI}:
  \href{http://dx.doi.org/#1}{#1}}}\def\DOIURL#1#2{{\small{DOI}:
  \href{http://dx.doi.org/#2}{#1}}}

\end{document}